\newcommand{\R}{\mathbb{R}}
\newcommand{\N}{\mathbb{N}}
\newcommand{\E}{\mathcal{E}}
\DeclareMathOperator{\am}{am}
\DeclareMathOperator{\sn}{sn}
\DeclareMathOperator{\dn}{dn}
\numberwithin{equation}{section}
\newtheorem{Theorem}{Theorem}[section]
\newtheorem*{Theorem*}{Theorem}
\newtheorem{Lemma}[Theorem]{Lemma}
\theoremstyle{definition}
\begin{document}
\allowdisplaybreaks

\renewcommand{\thefootnote}{}

\newcommand{\arXivNumber}{2306.12539}

\renewcommand{\PaperNumber}{021}

\FirstPageHeading

\ShortArticleName{On the Hill Discriminant of Lam\'e's Differential Equation}

\ArticleName{On the Hill Discriminant of Lam\'e's Differential\\ Equation\footnote{This paper is a~contribution to the Special Issue on Asymptotics and Applications of Special Functions in Memory of Richard Paris. The~full collection is available at \href{https://www.emis.de/journals/SIGMA/Paris.html}{https://www.emis.de/journals/SIGMA/Paris.html}}}

\Author{Hans VOLKMER}

\AuthorNameForHeading{H.~Volkmer}

\Address{Department of Mathematical Sciences, University of Wisconsin - Milwaukee, USA}
\Email{\href{mailto:volkmer@uwm.edu}{volkmer@uwm.edu}}

\ArticleDates{Received July 25, 2023, in final form March 08, 2024; Published online March 16, 2024}

\Abstract{Lam\'e's differential equation is a linear differential equation of the second order with a periodic coefficient involving the Jacobian elliptic function $\sn$ depending on the modulus $k$, and two additional parameters $h$ and $\nu$. This differential equation appears in several applications, for example, the motion of coupled particles in a periodic potential. Stability and existence of periodic solutions of Lam\'e's equations is determined by the value of its Hill discriminant $D(h,\nu,k)$. The Hill discriminant is compared to an explicitly known quantity including explicit error bounds. This result is derived from the observation that Lam\'e's equation with $k=1$ can be solved by hypergeometric functions because then the elliptic function $\sn$ reduces to the hyperbolic tangent function. A~connection relation between hypergeometric functions then allows the approximation of the Hill discriminant by a simple expression. In particular, one obtains an asymptotic approximation of $D(h,\nu,k)$ when the modulus $k$ tends to $1$.}

\Keywords{Lam\'e's equation; Hill's discriminant; asymptotic expansion; stability}

\Classification{33E10; 34D20}

\renewcommand{\thefootnote}{\arabic{footnote}}
\setcounter{footnote}{0}

\section{Introduction}%\label{intro}

Kim, Levi and Zhou \cite{Kim} consider two elastically coupled particles positioned at $x(t)$, $y(t)$ in a~periodic potential $V(x)$.
The system is described by
\[ \ddot{x}+V'(x)=\gamma (y-x),\qquad \ddot{y}+V'(y)=\gamma(x-y),\]
where $\gamma$ denotes the coupling constant.
Let $x(t)=y(t)=p(t)$ be a synchronous solution. If we linearize the system around
this synchronous solution, $x=p+\xi$, $y=p+\eta$, and set $u=\xi+\eta$, $w=\xi-\eta$, then we obtain
\begin{gather}
 \ddot{u}+V''(p)u=0,\nonumber\\%\label{1:eq1}
 \ddot{w}+(2\gamma+V''(p))w=0 .\label{1:eq2}
\end{gather}
These are Hill equations \cite{MW}, that is, they are of the form
\begin{equation}\label{1:eq3}
\ddot w+ q(t) w =0
\end{equation}
with a periodic coefficient function $q(t)$, say of period $\sigma>0$.
In this and many other applications the Hill discriminant $D$ associated with \eqref{1:eq3} plays an important role.
The discriminant $D$ is defined as the trace of the endomorphism $w(t)\mapsto w(t+\sigma)$ of the two-dimensional solution space of
\eqref{1:eq3} onto itself.
It is well known \cite{MW} that equation \eqref{1:eq3} is stable if $|D|<2$ and unstable if $|D|>2$. The condition $D=2$ is equivalent to
the existence of a nontrivial solution with period~$\sigma$ while $D=-2$ is equivalent to
the existence of a nontrivial solution with semi-period~$\sigma$.

In this work, we are interested in the special case $V(x)=-\cos x$. Then $p(t)$ is a solution of the differential equation $\ddot{p}+\sin p=0$ of the mathematical pendulum.
We get \cite[Section~22.19\,(i)]{dlmf}
\[
 p(t,\E)=2\am\left(\frac{t}{k},k\right),\qquad\text{where $k^2=\frac{2}{\E+2}$},
 \]
$\E$ denotes energy, and $\am$ is Jacobi's amplitude function \cite[Section~22.16\,(i)]{dlmf}.
Then equation~\eqref{1:eq2} becomes
\[
\frac{{\rm d}^2w}{{\rm d}t^2}+\left(2\gamma+1-2\sn^2\left(\frac{t}{k},k\right)\right) w=0,
\]
where $\sn(x,k)=\sin \am(x,k)$ is one of the Jacobian elliptic functions \cite[Section~16]{dlmf}.
If we substitute $t=ks$, we obtain Lam\'e's equation \cite[Section~29]{dlmf}
\begin{equation}\label{1:Lame}
\frac{{\rm d}^2w}{{\rm d}s^2} +\bigl(h-\nu(\nu+1)k^2\sn^2(s,k)\bigr) w=0
\end{equation}
with parameters $h=k^2(2\gamma+1)$ and $\nu=1$.
There is no explicit formula for the corresponding Hill discriminant
$D=D(h,\nu,k)$.
However, in \cite{Kim} a remarkable asymptotic formula for this Hill discriminant as $\E\to 0$ (or $k\to1$) is given.
It is shown that
\begin{equation}\label{1:eq4}
 D(h,1,k)=a\cos\left(\omega\ln \E-\phi\right)+o(\E) \qquad\text{as $\E\to 0$},
\end{equation}
where $\omega^2=2\gamma-1$.

The main result of this paper is Theorem \ref{t3}
which improves on \eqref{1:eq4} in three directions.
\begin{enumerate}\itemsep=0pt
\item
We allow any real $\nu$ in place of $\nu=1$. Since we may replace $\nu$ by $-1-\nu$ we assume $\nu\ge -\frac12$ without loss of generality.
\item
We provide explicit values for the amplitude $a$ and the phase shift $\phi$ in \eqref{1:eq4}
\item
We give explicit error bounds. This makes it possible to prove
stability of the Lam\'e equation in some cases.
\end{enumerate}

The idea behind the proof of Theorem \ref{t3} is the observation that Lam\'e's equation \eqref{1:Lame} with $k=1$
can be solved in terms of the hypergeometric function $F(a,b;c,x)$.
Then well-known connection relations between hypergeometric functions play a crucial role.

As a preparation, we present some elementary results on linear differential equations of the second order in
Section \ref{second}.
In Section~\ref{Lame1}, we give a quick review of the Lam\'e equation. In Section~\ref{Lame2}, we consider the special case
of the Lam\'e equation when $k=1$. In Section \ref{Hill} we combine our results to obtain Theorem \ref{t3}.

\section{Lemmas on second order linear equations}\label{second}

Let $u$ be the solution of the initial value problem
\[ u''+q(t)u=r(t),\qquad u(a)=u'(a)=0,\]
where $q,r\colon [a,b]\to\R$ are continuous functions.
By the variation of constants formula \cite[Section~2.6]{C},
\[ u(t)=\int_a^t L(t,s) r(s)\,{\rm d}s,\qquad u'(t)=\int_a^t \partial_1 L(t,s)r(s)\,{\rm d}s,\]
where $y(t)=L(t,s)$ is the solution of
\begin{equation}\label{ode1}
 y''+q(t)y=0
\end{equation}
determined by the initial conditions $y(s)=0$, $y'(s)=1$.
Let $L_1$, $L_2$ be constants such that
\begin{equation}\label{est}
 |L(t,s)|\le L_1,\quad |\partial_1 L(t,s)|\le L_2 \qquad\text{for $a\le s\le t\le b$}.
 \end{equation}
Then it follows that
\begin{equation}\label{est2}
\|u\|_\infty \le L_1 \int_a^b |r(s)|\,{\rm d}s,\qquad \|u'\|_\infty \le L_2 \int_a^b |r(s)|\,{\rm d}s,
\end{equation}
where $\|f\|_\infty:=\max_{t\in[a,b]} |f(t)|$.

\begin{Lemma}\label{l1}
Let $p,q\colon [a,b]\to \R$ be continuous. Let $L_1, L_2$ be as in \eqref{est}.
Let $y$ be a solution of \eqref{ode1} and $w$ a solution of $w''+p(t) w=0$ with $y(a)=w(a)$ and $y'(a)=w'(a)$.
Then
\begin{gather*}
 \|y-w\|_\infty \le L_1 \|w\|_\infty \int_a^b|p(s)-q(s)|\,{\rm d}s,\\
 \|y'-w'\|_\infty \le L_2 \|w\|_\infty \int_a^b|p(s)-q(s)|\,{\rm d}s.
\end{gather*}
\end{Lemma}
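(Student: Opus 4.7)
The plan is to reduce the lemma to the variation-of-constants estimate \eqref{est2} already established just above the statement.

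First I would set $u := y - w$, so that $u(a) = 0$ and $u'(a) = 0$ by the matching initial conditions. Next I would compute the differential equation satisfied by $u$: since $y'' = -q(t) y$ and $w'' = -p(t) w$, we get
\[
u'' = -q(t) y + p(t) w = -q(t)(y - w) + (p(t) - q(t)) w = -q(t) u + (p(t) - q(t)) w.
\]
Thus $u$ solves the inhomogeneous problem $u'' + q(t) u = r(t)$ with $u(a) = u'(a) = 0$, where $r(t) := (p(t) - q(t)) w(t)$.

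Having put $u$ in exactly the form considered in the paragraph preceding Lemma \ref{l1}, I would invoke the estimates \eqref{est2} directly. This yields
\[
\|y - w\|_\infty = \|u\|_\infty \le L_1 \int_a^b |r(s)|\,{\rm d}s \le L_1 \|w\|_\infty \int_a^b |p(s) - q(s)|\,{\rm d}s,
\]
and analogously $\|y' - w'\|_\infty \le L_2 \|w\|_\infty \int_a^b |p(s) - q(s)|\,{\rm d}s$, which is precisely the claim.

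There is no real obstacle here: the lemma is a standard Grönwall-type comparison, and once one recognizes $y - w$ as the solution of a forced version of \eqref{ode1} with zero initial data and forcing $(p - q) w$, the conclusion is immediate from the displayed bounds \eqref{est2}. The only minor point to check is that the trivial factorization $|r(s)| \le \|w\|_\infty |p(s) - q(s)|$ is used to pull $\|w\|_\infty$ out of the integral, which is allowed since $w$ is continuous on the compact interval $[a,b]$.
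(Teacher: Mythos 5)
Your proof is correct and follows exactly the route the paper takes: set $u = y - w$, observe that $u'' + q(t)u = (p(t)-q(t))w(t)$ with zero initial data at $a$, and apply the bounds \eqref{est2} after pulling $\|w\|_\infty$ out of the integral. Nothing to add.
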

\begin{proof}
For $u=y-w$, we have
\[ u''(t)+q(t) u(t)=(p(t)-q(t))w(t) .\]
The desired result follows from \eqref{est2}.
\end{proof}

\begin{Lemma}\label{l2}
Let $q\colon [a,b]\to(0,\infty)$ be continuously differentiable and monotone. Set
 \[ m:=\min_{t\in[a,b]} q(t)>0,\qquad M:=\max_{t\in[a,b]} q(t).\]
Let $y_1$, $y_2$ be the solutions of \eqref{ode1} determined by $y_1(a)=y_2'(a)=1$, $y_1'(a)=y_2(a)=0$.
If $q$ is nondecreasing, then
\[ \|y_1\|^2_\infty\le 1,\qquad
 \|y_1'\|_\infty^2\le M,\qquad \|y_2\|_\infty^2\le \frac{1}{m}, \qquad \|y_2'\|_\infty^2\le \frac{M}{m},
\]
and, if $q$ is nonincreasing,
\[ \|y_1\|^2_\infty\le \frac{M}{m},\qquad
 \|y_1'\|_\infty^2\le M,\qquad \|y_2\|_\infty^2\le \frac{1}{m}, \qquad \|y_2'\|_\infty^2\le 1.
\]
\end{Lemma}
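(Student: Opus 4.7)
The plan is to introduce two energy-like quantities associated with an arbitrary solution $y$ of \eqref{ode1}:
\[ E(t) := y'(t)^2 + q(t)\, y(t)^2, \qquad F(t) := \frac{y'(t)^2}{q(t)} + y(t)^2. \]
Using $y'' = -q y$, a direct differentiation gives
\[ E'(t) = q'(t)\, y(t)^2, \qquad F'(t) = -\frac{q'(t)\, y'(t)^2}{q(t)^2}. \]
Hence if $q$ is nondecreasing then $E$ is nondecreasing and $F$ is nonincreasing, and the roles swap if $q$ is nonincreasing. Each of the eight bounds claimed in the lemma will drop out of one of these monotonicities combined with one of the trivial pointwise inequalities $y^2\le F$, $(y')^2/q\le F$, $q y^2\le E$, $(y')^2\le E$, evaluated against the initial data at $t=a$.

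Concretely, in the nondecreasing case applied to $y_1$ (where $E(a)=q(a)$ and $F(a)=1$), monotonicity of $F$ gives $y_1(t)^2\le F(t)\le 1$ for all $t$. Then one bootstraps: $E'(t)=q'(t) y_1(t)^2\le q'(t)$, which integrates to $E(t)\le q(t)\le M$, and therefore $y_1'(t)^2\le M$. The bounds on $y_2$ in the nondecreasing case run in parallel from $E(a)=1$ and $F(a)=1/q(a)$: first $y_2(t)^2\le F(a)\le 1/m$, and then $E'(t)=q'(t) y_2(t)^2\le q'(t)/m$ integrates to $E(t)\le 1+(q(t)-q(a))/m\le M/m$, giving $y_2'(t)^2\le M/m$.

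The nonincreasing case is simpler because now $E$ itself is nonincreasing, so $E(t)\le E(a)$ is available directly. For $y_1$ this yields both $y_1'(t)^2\le q(a)\le M$ and, via $q y_1^2\le E$, the bound $y_1(t)^2\le M/q(t)\le M/m$. For $y_2$ one gets $E(t)\le 1$ immediately, whence $y_2'(t)^2\le 1$ and $y_2(t)^2\le 1/q(t)\le 1/m$. The only place where care is needed is the bound $\|y_1'\|_\infty^2\le M$ in the nondecreasing case, since there $E$ is growing and one cannot simply quote $E(t)\le E(a)$; the trick is to recycle the already-established estimate $y_1^2\le 1$ into the differential identity $E'=q' y_1^2$ before integrating. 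Once that small bootstrap is noticed, every remaining inequality is a one-line consequence of the sign of $E'$ or $F'$.
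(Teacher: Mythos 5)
Your proof is correct and rests on the same two energy functionals as the paper (your $F$ is the paper's $u_j$ and your $E$ is its $v_j$), with the same monotonicity argument in each case. The only difference is that the bootstrap you use for $\|y_1'\|_\infty^2\le M$ and $\|y_2'\|_\infty^2\le M/m$ in the nondecreasing case is unnecessary: since $y_j'(t)^2/q(t)\le F(t)\le F(a)$, those derivative bounds already follow from the monotonicity of $F$ alone (giving $y_1'(t)^2\le q(t)\le M$ and $y_2'(t)^2\le q(t)/m\le M/m$), which is exactly how the paper reads off all four estimates from the single functional $u_j$.
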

\begin{proof}
Suppose first that $q$ is nondecreasing. Set
\[ u_j(t):=y_j(t)^2+\frac{1}{q(t)}y_j'(t)^2 .\]
Then
\[ u_j'(t)=-\frac{q'(t)}{q(t)^2}y_j'(t)^2\le 0,\]
so $u_j(t)\le u_j(a)$ for all $t\in[a,b]$. Now $u_1(a)=1$ and $u_2(a)=\frac1m$ imply $y_1(t)^2\le 1$,
$y_1'(t)^2\le M$, $y_2(t)^2\le \frac1m$, $y_2'(t)^2\le \frac{M}{m}$.
If $q$ is nonincreasing, we argue similarly using $v_j(t)=y_j'(t)^2+q(t)y_j(t)^2$ in place of $u_j$.
\end{proof}

\section{Lam\'e's equation}\label{Lame1}

For $h\in \R$, $\nu\ge -\frac12$, $k\in(0,1)$, we consider the Lam\'e equation \cite[Section~IX]{A} and \cite[Section~XV]{EMO}%
\begin{equation}\label{ode2}
y''+\bigl(h-\nu(\nu+1)k^2\sn^2(t,k)\bigr) y=0 .
\end{equation}
This is a Hill equation with period $2K(k)$, where $K=K(k)$
is the complete elliptic integral of the first kind:
\[
K=\int_0^1\frac{{\rm d}t}{\sqrt{1-t^2}\sqrt{1-k^2t^2}}.
\]
Equation \eqref{ode2} also makes sense for $k=1$ \cite[Section~22.5\,(ii)]{dlmf} when it becomes
\begin{equation}\label{ode3}
y''+\bigl(h-\nu(\nu+1)\tanh^2 t\bigr) y=0 .
\end{equation}
Of course, this is not a Hill equation anymore.
Let
\[
y_1(t)=y_1(t,s,h,\nu,k) \qquad \text{and} \qquad y_2(t)=y_2(t,s,h,\nu,k)
\]
 be the solutions of \eqref{ode2}
determined by the initial conditions
\[
y_1(s)=y_2'(s)=1, \qquad y_1'(s)=y_2(s)=0.
\]

Set $q(t):=h-\nu(\nu+1)k^2\sn^2(t,k)$. This function is increasing on $[0,K]$ if $-\frac12\le \nu<0$ and decreasing
on $[0,K]$ if $\nu>0$.
We assume that $h>0$ and $h>\nu(\nu+1)k^2$. Then $q(t)>0$ for $t\in[0,K]$.
We define $H:=\bigl(h-\nu(\nu+1)k^2\bigr)^{1/2}$ and
\begin{alignat*}{3}%\label{C}
& C_1(h,\nu,k):=\begin{cases} 1 & \text{if $\nu<0$},\\h^{1/2}H^{-1} & \text{if $\nu\ge0$},
\end{cases}&&\qquad
C_1'(h,\nu,k):=\begin{cases} H & \text{if $\nu<0$},\\h^{1/2} & \text{if $\nu\ge0$},
\end{cases}& \\
& C_2(h,\nu,k):=\begin{cases} h^{-1/2} & \text{if $\nu<0$},\\H^{-1} & \text{if $\nu\ge0$},
\end{cases}&& \qquad
C_2'(h,\nu,k):=\begin{cases} h^{-1/2} H & \text{if $\nu<0$},\\1 & \text{if $\nu\ge0$}.
\end{cases}&
\end{alignat*}

\begin{Lemma}\label{l3}
Suppose that $h>0$ and $h-\nu(\nu+1)k^2>0$. Then, for $0\le s\le t\le K$,
\[
 |y_1(t,s)|\le C_1,\qquad
 |y_1'(t,s)|\le C_1',\qquad |y_2(t,s)|\le C_2, \qquad |y_2'(t,s)|\le C_2'.
\]
If $k=1$, this is true for all $0\le s\le t$.
\end{Lemma}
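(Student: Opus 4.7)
The plan is to reduce Lemma~\ref{l3} directly to Lemma~\ref{l2} by identifying, in each of the two sign regimes of $\nu$, the monotonicity of $q(t)=h-\nu(\nu+1)k^2\sn^2(t,k)$ on $[0,K]$ and the corresponding values of $m$ and $M$.

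First I would observe that $\sn(\cdot,k)$ is increasing from $0$ to $1$ on $[0,K]$, so $\sn^2$ is increasing from $0$ to $1$. Therefore $q(0)=h$ and $q(K)=h-\nu(\nu+1)k^2=H^2$. When $-\tfrac12\le\nu<0$ we have $\nu(\nu+1)<0$, so $q$ is nondecreasing on $[0,K]$; when $\nu\ge0$, $\nu(\nu+1)\ge 0$, so $q$ is nonincreasing. Under the hypotheses $h>0$ and $H^2>0$, $q$ is strictly positive on $[0,K]$, and its extreme values over any subinterval $[s,t]\subset[0,K]$ are controlled by $h$ and $H^2$: in the nondecreasing case the min on $[s,t]$ is $q(s)\ge h$ and the max is $q(t)\le H^2$, while in the nonincreasing case the min is $q(t)\ge H^2$ and the max is $q(s)\le h$.

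Next I would apply Lemma~\ref{l2} on the interval $[s,t]$ in each case. For $\nu<0$ (nondecreasing $q$), Lemma~\ref{l2} gives $\|y_1\|_\infty^2\le 1$, $\|y_1'\|_\infty^2\le M\le H^2$, $\|y_2\|_\infty^2\le 1/m\le 1/h$, $\|y_2'\|_\infty^2\le M/m\le H^2/h$, which are exactly $C_1^2$, $(C_1')^2$, $C_2^2$, $(C_2')^2$ for $\nu<0$. For $\nu\ge 0$ (nonincreasing $q$), Lemma~\ref{l2} yields $\|y_1\|_\infty^2\le M/m\le h/H^2$, $\|y_1'\|_\infty^2\le M\le h$, $\|y_2\|_\infty^2\le 1/m\le 1/H^2$, $\|y_2'\|_\infty^2\le 1$, matching the $\nu\ge 0$ choices of the $C$'s. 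Since $y_j$ and $y_j'$ are evaluated at $t$, the pointwise bounds follow.

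Finally, for $k=1$ I would note that $\sn(t,1)=\tanh t$ is increasing from $0$ to $1$ on $[0,\infty)$, so exactly the same monotonicity analysis of $q$ applies on any compact subinterval $[s,t]\subset[0,\infty)$, and the min/max over $[s,t]$ are again bounded below by $h$ or $H^2$ and above by $h$ or $H^2$ as before. Applying Lemma~\ref{l2} on $[s,t]$ gives bounds independent of $t$, so they persist for all $0\le s\le t<\infty$.

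There is no genuine obstacle here; the only thing to keep straight is the bookkeeping between the two sign regimes of $\nu$ and which of $h$ or $H^2$ plays the role of $m$ versus $M$ in Lemma~\ref{l2}. Once the monotonicity and the endpoint values $q(0)=h$, $q(K)=H^2$ are recorded, the definitions of $C_1,C_1',C_2,C_2'$ are essentially forced, and the lemma follows by a direct substitution into Lemma~\ref{l2}.
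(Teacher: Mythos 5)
Your proposal is correct and follows exactly the route the paper intends: the paper's proof of Lemma~\ref{l3} is the single sentence ``This follows from Lemma~\ref{l2},'' and your write-up supplies precisely the omitted bookkeeping (monotonicity of $q$ in each sign regime of $\nu$, the bounds $m\ge\min\bigl(h,H^2\bigr)$ and $M\le\max\bigl(h,H^2\bigr)$ on $[s,t]$, and the application of Lemma~\ref{l2} with $a=s$). The $k=1$ extension via uniformity of the bounds over compact subintervals of $[0,\infty)$ is also the intended argument.
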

\begin{proof}
This follows from Lemma \ref{l2}.
\end{proof}

In the next theorem, we use the complete elliptic integral $E=E(k)$ of the second kind:
\[ E=\int_0^1 \frac{\sqrt{1-k^2t^2}}{\sqrt{1-t^2}}\,{\rm d}t .\]

\begin{Theorem}\label{t1}
Suppose that $h>0$ and $h-\nu(\nu+1)>0$. Then
\begin{gather*}
 |y_1(K,0,h,\nu,k)-y_1(K,0,h,\nu,1)|\le C_1C_2|\nu|(\nu+1)(E(k)-\tanh K(k)),\\
 |y_2'(K,0,h,\nu,k)-y_2'(K,0,h,\nu,1)|\le C_2C_2'|\nu|(\nu+1)(E(k)-\tanh K(k)),
\end{gather*}
where the constants $C$ are formed with $k=1$.
\end{Theorem}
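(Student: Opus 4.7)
The plan is to apply Lemma~\ref{l1} on the interval $[0,K(k)]$, comparing Lam\'e's equation at modulus~$k$ (coefficient $p(t):=h-\nu(\nu+1)k^2\sn^2(t,k)$) with the same equation at modulus~$1$ (coefficient $q(t):=h-\nu(\nu+1)\tanh^2 t$). For the first estimate take $y:=y_1(\cdot,0,h,\nu,1)$ (solving $y''+qy=0$) and $w:=y_1(\cdot,0,h,\nu,k)$; for the second, replace $y_1$ by $y_2$ and use the derivative half of Lemma~\ref{l1}. In each case Lemma~\ref{l1} yields a bound of the form $L_i\,\|w\|_\infty\int_0^K|p-q|\,{\rm d}s$. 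The Green's kernel bounds $L_1\le C_2(h,\nu,1)$ and $L_2\le C_2'(h,\nu,1)$ come from Lemma~\ref{l3} applied at $k=1$ (where the bounds are stated for all $s\le t$), while $\|w\|_\infty\le C_1(h,\nu,k)$ and $\|w\|_\infty\le C_2(h,\nu,k)$, respectively, follow from Lemma~\ref{l3} at modulus~$k$. A direct inspection of the definitions shows $C_i(h,\nu,k)\le C_i(h,\nu,1)$ (trivial for $\nu<0$; for $\nu\ge 0$ it follows from the monotonicity of $H=(h-\nu(\nu+1)k^2)^{1/2}$ in $k$), so the constants can be upgraded to their $k=1$ values as stated.

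The remaining task is to evaluate $\int_0^K|p-q|\,{\rm d}s$. Since $\nu\ge-\tfrac12$ one has $|\nu(\nu+1)|=|\nu|(\nu+1)$, so
\[
\int_0^K|p-q|\,{\rm d}s=|\nu|(\nu+1)\int_0^K\bigl|\tanh^2 s-k^2\sn^2(s,k)\bigr|\,{\rm d}s.
\]
The crux is to prove the pointwise inequality $\tanh s\ge k\sn(s,k)$ on $[0,K(k)]$, for then the integrand is sign-definite and the standard identities $\int_0^K\tanh^2 s\,{\rm d}s=K-\tanh K$ and $\int_0^K\dn^2(s,k)\,{\rm d}s=E(k)$ (equivalently $\int_0^K k^2\sn^2\,{\rm d}s=K-E$) immediately yield $E(k)-\tanh K(k)$.

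I would establish the inequality by setting $\phi(s):=\tanh s-k\sn(s,k)$ and showing that $\phi'(s^*)>0$ at any zero $s^*\in[0,K]$. Indeed, $\phi(s^*)=0$ gives $1-\tanh^2 s^*=1-k^2\sn^2(s^*,k)$, whence, using $\tanh'=1-\tanh^2$ and $\sn'=\cn\dn$,
\[
\phi'(s^*)=\sqrt{1-k^2\sn^2(s^*,k)}\,\bigl(\sqrt{1-k^2\sn^2(s^*,k)}-k\sqrt{1-\sn^2(s^*,k)}\bigr),
\]
which is strictly positive whenever $k<1$. Combined with $\phi(0)=0$ and $\phi'(0)=1-k>0$, a standard continuity argument (if $\phi$ first became negative at some $s_0>0$, then $\phi(s_0)=0$ and $\phi'(s_0)\le 0$, a contradiction) rules out any sign change on $[0,K]$. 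This transversality argument is the only nontrivial step; the $k=1$ case is trivial since both sides vanish, and assembling the inequality with the bookkeeping from Lemmas~\ref{l1} and~\ref{l3} finishes both estimates.
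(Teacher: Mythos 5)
Your proof is correct and follows essentially the same route as the paper: apply Lemma~\ref{l1} with the kernel and solution bounds from Lemma~\ref{l3}, and evaluate the integral as $E(k)-\tanh K(k)$ using the pointwise inequality $k\sn(t,k)\le\tanh t$ on $[0,K]$. The only differences are cosmetic: the paper cites that inequality from \cite{V2004} while you give a self-contained (and correct) transversality proof of it, and you are somewhat more explicit than the paper about why the modulus-$k$ constants may be replaced by their $k=1$ values.
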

\begin{proof}
We apply Lemma \ref{l1} with
\[ q(t)=h-\nu(\nu+1)k^2\sn^2(t,k),\qquad p(t)=h-\nu(\nu+1)\tanh^2 t,\]
and
\[ y(t)=y_1(t,0,h,\nu,k),\qquad w(t)=y_1(t,0,h,\nu,1) \]
on the interval $t\in[0,K]$. We note that \cite[formula~(4.4)]{V2004}
\[ k\sn(t,k)\le \tanh t\le \sn(t,k)\qquad \text{for $t\in[0,K]$} .\]
Therefore,
\begin{align*}
 \int_0^K|p(s)-q(s)|\,{\rm d}s&=|\nu|(\nu+1)\int_0^K \bigl(\tanh^2 s-k^2\sn^2(s,k)\bigr)\,{\rm d}s\\
 &= |\nu|(\nu+1)\int_0^K \bigl(\dn^2(s,k)-1+\tanh^2 s\bigr)\,{\rm d}s.
\end{align*}
Using $\int_0^K \dn^2(s,k)\,{\rm d}s= E$ \cite[p.~518]{WW}, we get
\[ \int_0^K|p(s)-q(s)|\,{\rm d}s=|\nu|(\nu+1)(E-\tanh K) .\]
By Lemma \ref{l3}, $|w(t)|\le C_1$ and we can choose $L_1=C_2$.
This gives the desired estimate for $y_1$. The estimate for $y_2'$ is proved similarly.
\end{proof}

Note that
\[ \int_0^K \bigl(\tanh^2s-k^2\sn^2(s,k)\bigr)\,{\rm d}s\le \bigl(1-k^2\bigr)\int_0^K \sn^2(s,k)\,{\rm d}s\le k'^2 K,\]
where $k'=\sqrt{1-k^2}$,
so
\[ E-\tanh K\le k'^2 K.\]
Also note that \cite[formula~(19.9.1)]{dlmf}
\[ K(k)\le \frac\pi2-\ln k' ,\]
so $E(k)-\tanh K(k)=O((1-k)\ln(1-k))$ as $k\to 1$.

\section[The Lam\'e equation for k=1]{The Lam\'e equation for $\boldsymbol{k=1}$}\label{Lame2}

Let $w_1$, $w_2$ be the solutions of \eqref{ode3} determined by initial conditions
$w_1(0)=w_2'(0)=1$, $w_1'(0)=w_2(0)=0$. Then $w_j(t)=y_j(t,0,h,\nu,1)$ using the notation of the previous section.
We assume that $\nu\ge -\frac12$ and $h>\nu(\nu+1)$, and set
\begin{equation}\label{omega}
 \mu:=\sqrt{\nu(\nu+1)-h}={\rm i}\omega,\qquad\text{where $\omega>0$}.
\end{equation}
The substitution $x=\tanh t$ transforms \eqref{ode3} to the associated Legendre equation \cite[formula~(14.2.2)]{dlmf}
of degree $\nu$ and order $\mu$. According to \cite[Section~5, formula~(15.09)]{O}, we
express~$w_j$ in terms of the hypergeometric function $F(a,b;c;z)$ as follows:
\begin{gather*}
\begin{split}
&w_1(t) = \cosh^\mu tF\bigl(-\tfrac12(\mu+\nu),\tfrac12(1-\mu+\nu);\tfrac12;\tanh^2 t\bigr),\\
&w_2(t) = \tanh t\cosh^\mu t F\bigl(\tfrac12(1-\mu-\nu),\tfrac12(2-\mu+\nu);\tfrac32;\tanh^2 t\bigr).
\end{split}
\end{gather*}
This can be confirmed by direct computation.
In order to determine the behaviour of the functions $w_j(t)$ as $\R\ni t\to\infty$, we use the connection formula \cite[formula~(15.8.4)]{dlmf} and find
$w_j(t)=\operatorname{Re}(v_j(t))$, where
\begin{gather*}
v_1(t) = \frac{A_1}{(2\cosh t)^{-\mu}}F\bigl(-\tfrac12(\mu+\nu),\tfrac12(1-\mu+\nu);1-\mu;\cosh^{-2} t\bigr),\\%\label{v1}
v_2(t) = \frac{A_2\tanh t}{(2\cosh t)^{-\mu}}F\bigl(\tfrac12(1-\mu-\nu),\tfrac12(2-\mu+\nu);1-\mu;\cosh^{-2} t\bigr),%\label{v2}
\end{gather*}
and
\begin{gather*}
A_1 = \frac{2^{1-\mu}\pi^{1/2}\Gamma(\mu)}{\Gamma\bigl(\tfrac12(1+\mu+\nu)\bigr)\Gamma\bigl(\frac12(\mu-\nu)\bigr)},\\
A_2 = \frac{2^{-\mu} \pi^{1/2}\Gamma(\mu)}{\Gamma\bigl(\tfrac12(2+\mu+\nu)\bigr)\Gamma\bigl(\frac12(1+\mu-\nu)\bigr)}.
\end{gather*}
We set
\[ z_j(t)=\operatorname{Re}\bigl(A_j{\rm e}^{{\rm i}\omega t}\bigr),\qquad j=1,2 .\]

\begin{Theorem}\label{t2}
Suppose $h>0$ and $h>\nu(\nu+1)$.
Then, for all $t\ge 0$,
\begin{gather*}
 |w_1(t)-z_1(t)|\le \omega^{-1}C_1|\nu|(\nu+1)(1-\tanh t),\\
 |w_2'(t)-z_2'(t)|\le C_2|\nu|(\nu+1)(1-\tanh t),
\end{gather*}
where $C_1$, $C_2$ are formed with $k=1$.
\end{Theorem}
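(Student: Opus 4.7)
The plan is to derive an integral equation for the difference $u_j := w_j - z_j$ by variation of parameters ``from infinity'' and then estimate the integral using the bounds on $w_j$ already supplied by Lemma~\ref{l3}. Rewriting the potential as $h - \nu(\nu+1)\tanh^2 t = \omega^2 + \nu(\nu+1)\bigl(1-\tanh^2 t\bigr)$ shows that \eqref{ode3} becomes
\[
w_j'' + \omega^2 w_j = -\nu(\nu+1)\bigl(1-\tanh^2 t\bigr)w_j(t),
\]
and since $z_j(t) = \operatorname{Re}\bigl(A_j {\rm e}^{{\rm i}\omega t}\bigr)$ is annihilated by $\partial_t^2 + \omega^2$, the difference $u_j$ satisfies
\[
u_j'' + \omega^2 u_j = -\nu(\nu+1)\bigl(1-\tanh^2 t\bigr)w_j(t).
\]

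To integrate this equation from $+\infty$ I first verify that $u_j(t), u_j'(t) \to 0$ as $t\to\infty$. Using $(2\cosh t)^\mu = {\rm e}^{{\rm i}\omega t}\bigl(1+O({\rm e}^{-2t})\bigr)$ and $F(\cdot,\cdot;\cdot;\cosh^{-2}t) = 1 + O\bigl({\rm e}^{-2t}\bigr)$, substitution into the explicit formulas for $v_j$ and their derivatives gives $v_j(t) - A_j {\rm e}^{{\rm i}\omega t} = O\bigl({\rm e}^{-2t}\bigr)$ and $v_j'(t) - {\rm i}\omega A_j {\rm e}^{{\rm i}\omega t} = O\bigl({\rm e}^{-2t}\bigr)$; taking real parts yields the claimed decay. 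Variation of parameters applied to $u_j'' + \omega^2 u_j = g$ with vanishing data at $+\infty$, using the fundamental pair $\cos(\omega t), \sin(\omega t)$, then produces
\[
u_j(t) = -\frac{\nu(\nu+1)}{\omega}\int_t^\infty \sin(\omega(s-t))\bigl(1-\tanh^2 s\bigr) w_j(s)\,{\rm d}s,
\]
and differentiating under the integral sign gives
\[
u_j'(t) = \nu(\nu+1)\int_t^\infty \cos(\omega(s-t))\bigl(1-\tanh^2 s\bigr) w_j(s)\,{\rm d}s.
\]

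The two bounds now follow at once: Lemma~\ref{l3} with $k=1$ supplies $|w_1(s)| \le C_1$ and $|w_2(s)| \le C_2$ for all $s \ge 0$, while $|\sin|, |\cos| \le 1$ together with $\int_t^\infty \bigl(1-\tanh^2 s\bigr)\,{\rm d}s = 1 - \tanh t$ convert the integral representation for $u_1$ into the first claim and that for $u_2'$ into the second. The only step calling for genuine care is the verification of the boundary conditions at infinity, because this is what singles out $z_j$ within the two-dimensional solution space of the free equation $z'' + \omega^2 z = 0$; the hypergeometric representations of $v_j$ enter precisely at this point, and once the decay is established the remaining estimates are routine.
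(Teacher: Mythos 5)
Your proposal is correct and follows essentially the same route as the paper: both identify the equation $u_j''+\omega^2 u_j=\nu(\nu+1)\bigl(\tanh^2 t-1\bigr)w_j$, establish $u_j,u_j'\to 0$ at infinity from the hypergeometric connection formula, integrate by variation of parameters from $+\infty$, and bound the resulting integral with Lemma~\ref{l3} and $\int_t^\infty\operatorname{sech}^2 s\,{\rm d}s=1-\tanh t$. The only cosmetic difference is that the paper reaches the integral representation by letting the initial point $t_0\to\infty$ in the standard variation-of-constants formula, whereas you write the vanishing-data-at-infinity formula directly.
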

\begin{proof}
Since $F(a,b;c;0)=1$, the representation $w_j(t)=\operatorname{Re}(v_j(t))$ yields
\begin{equation}\label{limit1}
\lim_{t\to\infty} w_j(t)-z_j(t)=\lim_{t\to\infty} \operatorname{Re}(A_j \bigl(2\cosh t)^{{\rm i}\omega}-A_j{\rm e}^{{\rm i}\omega t}\bigr) = 0.
\end{equation}
Similarly, we have
\begin{equation}\label{limit2}
 \lim_{t\to\infty} w_j'(t)-z_j'(t)=0 .
\end{equation}
The function $u_j=w_j-z_j$ satisfies
\[ u_j''+\omega^2 u_j=g_j(t), \qquad g_j(t):=\nu(\nu+1) \bigl(\tanh^2 t-1\bigr) w_j(t) .\]
Let $t_0, t\ge 0$. Then
\[ u_j(t)=u_j(t_0)\cos(\omega (t-t_0))+u_j'(t_0)\frac{\sin(\omega(t-t_0))}{\omega}+
\int_{t_0}^t \frac{\sin(\omega(t-s))}{\omega}g_j(s)\,{\rm d}s .\]
Letting $t_0\to\infty$, using \eqref{limit1}, \eqref{limit2} and Lemma \ref{l3}, we obtain
\[ |u_1(t)|\le\omega^{-1}\int_t^\infty |g_1(s)|\,{\rm d}s \le\omega^{-1}C_1|\nu|(\nu+1)(1-\tanh t) \]
as desired.
The estimate for $u_2'$ is derived similarly.
\end{proof}

The constant Wronskian of $z_1$, $z_2$ is
\[ z_1(t)z_2'(t)-z_1'(t)z_2(t) =\omega \operatorname{Im} \bigl(A_1\bar{A}_2\bigr) .\]
The reflection formula for the gamma function
\[ \Gamma(x)\Gamma(1-x)=\frac{\pi}{\sin(\pi x)}\]
gives
\begin{equation}\label{A1A2}
\omega A_1\bar{A}_2 = -\frac{\sin(\nu\pi)}{\sinh(\omega\pi)} +{\rm i}.
\end{equation}
Therefore,
\begin{equation*}%\label{wronskian}
 z_1(t)z_2'(t)-z_1'(t)z_2(t) =1.
\end{equation*}
Moreover,
\begin{equation}\label{zdisc}
 z_1(t)z_2'(t)+z_1'(t)z_2(t)=2z_1(t)z_2'(t)-1=\operatorname{Re}\bigl(B {\rm e}^{2{\rm i}\omega t}\bigr),
\end{equation}
where $B={\rm i}\omega A_1A_2$.
Using the duplication formula for the gamma function
\[ 2^{x-1}\Gamma\bigl(\tfrac12x\bigr)\Gamma\bigl(\tfrac12(x+1)\bigr)=\pi^{1/2} \Gamma(x)\]
we see that
\begin{equation}\label{B}
 B=\frac{\Gamma(1+\mu) \Gamma(\mu)}{\Gamma(1+\mu+\nu)\Gamma(\mu-\nu)} .
\end{equation}
If $\nu\in\N_0$, then
\[ B=\frac{({\rm i}\omega-1)({\rm i}\omega-2)\cdots({\rm i}\omega-\nu)}{({\rm i}\omega+1)({\rm i}\omega+2)\cdots({\rm i}\omega+\nu)}, \]
so $|B|=1$. If $\nu=1$, then
\[ B=\frac{{\rm i}\omega-1}{{\rm i}\omega+1}=\frac{\omega^2-1+{\rm i}2\omega}{\omega^2+1} \]
and
\[ \operatorname{Re}\bigl(B{\rm e}^{2{\rm i}\omega t}\bigr)=\frac{1}{\omega^2+1}\bigl(\bigl(\omega^2-1\bigr)\cos (2\omega t)-2\omega\sin(2\omega t)\bigr) .\]
By \eqref{A1A2},
\[ |B|^2=\big|\omega A_1 \bar{A}_2\big|^2=1+\frac{\sin^2\nu\pi}{\sinh^2\omega\pi}.\]
So $|B|>1$ if $\nu$ is not an integer.

\section{Hill's discriminant of Lam\'e's equation}\label{Hill}

The Hill discriminant $D(h,\nu,k)$ of Lam\'e's equation is given by \cite[p.~8]{MW}
\begin{equation}\label{hilldisc}
 D(h,\nu,k)=2(y_1(K)y_2'(K)+y_1'(K)y_2(K)) =2(2y_1(K)y_2'(K)-1),
 \end{equation}
where $y_j(t)=y_j(t,0,h,\nu,k)$ in the notation of Section \ref{Lame1}.
By combining Theorems \ref{t1} and \ref{t2}, we obtain the following main theorem of this work.

\begin{Theorem}\label{t3}
\qquad
\begin{enumerate} \itemsep=0pt
\item[$(a)$] Suppose $\nu\ge 0$ and $h>\nu(\nu+1)$. Then, for all $k\in(0,1)$,
\[ \bigl|D(h,\nu,k)-2\operatorname{Re}\bigl(B {\rm e}^{2{\rm i}\omega K(k)}\bigr)\bigr|\le 8h^{1/2}\omega^{-2}\nu(\nu+1)(E(k)+1-2\tanh K(k)).\]
\item[$(b)$] Suppose $\nu\in[-\frac12,0)$ and $h>0$. Then, for all $k\in(0,1)$,
\[\bigl|D(h,\nu,k)-2\operatorname{Re}\bigl(B {\rm e}^{2{\rm i}\omega K(k)}\bigr)\bigr|
 \le 8\omega h^{-1}|\nu|(\nu+1)(E(k)+1-2\tanh K(k)) .
\]
\end{enumerate}
The constants $\omega$ and $B$ are given in \eqref{omega} and \eqref{B}, respectively.
\end{Theorem}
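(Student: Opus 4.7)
The plan is to rewrite both sides using the explicit formulas \eqref{hilldisc} and \eqref{zdisc}: from $D=2\bigl(2y_1(K)y_2'(K)-1\bigr)$ and $\operatorname{Re}\bigl(B{\rm e}^{2{\rm i}\omega K}\bigr)=2z_1(K)z_2'(K)-1$ one obtains
\[ D(h,\nu,k)-2\operatorname{Re}\bigl(B{\rm e}^{2{\rm i}\omega K(k)}\bigr)=4\bigl(y_1(K)y_2'(K)-z_1(K)z_2'(K)\bigr), \]
where $y_j(t)=y_j(t,0,h,\nu,k)$. The natural way to pass from $y_j$ to $z_j$ is to bridge through the hyperbolic equation \eqref{ode3} by inserting the intermediate functions $w_j(t)=y_j(t,0,h,\nu,1)$ and splitting
\[ y_1y_2'-z_1z_2'=(y_1y_2'-w_1w_2')+(w_1w_2'-z_1z_2'). \]

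Each piece I would expand telescopically as $y_1y_2'-w_1w_2'=(y_1-w_1)y_2'+w_1(y_2'-w_2')$ and $w_1w_2'-z_1z_2'=(w_1-z_1)w_2'+z_1(w_2'-z_2')$. The four difference factors are controlled by Theorems~\ref{t1} and~\ref{t2}, and the non-difference factors $y_2'(K)$, $w_1(K)$, $w_2'(K)$ by Lemma~\ref{l3}. A bookkeeping point here is that each of $C_1,C_2,C_1',C_2'$ is a nondecreasing function of $k$ on $(0,1]$, so their $k=1$ values (which already appear in Theorems~\ref{t1} and~\ref{t2}) also dominate the $k$-dependent Lemma~\ref{l3} bounds uniformly.

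The only factor not directly handled by Lemma~\ref{l3} is $|z_1(K)|$, and I expect this to be the main obstacle. Since $z_1(t)=\operatorname{Re}\bigl(A_1{\rm e}^{{\rm i}\omega t}\bigr)$ is a pure sinusoid of amplitude $|A_1|$, its supremum is $|A_1|$ and is attained on an unbounded sequence of $t$. Theorem~\ref{t2} (applied at $k=1$) gives $w_1(t)-z_1(t)\to 0$ as $t\to\infty$, while Lemma~\ref{l3} (whose conclusion holds for all $t\ge 0$ when $k=1$) forces $|w_1(t)|\le C_1$ globally. Evaluating along the peaks of $z_1$ then yields $|A_1|\le C_1$ and hence $|z_1(K)|\le C_1$.

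It remains to assemble the four contributions. In case~(a), with $\nu\ge 0$, one has $C_1=h^{1/2}/\omega$, $C_2=1/\omega$ and $C_2'=1$, so that every one of the four summands becomes a multiple of $h^{1/2}/\omega^2$; they add to $\tfrac{2h^{1/2}}{\omega^2}\nu(\nu+1)\bigl(E+1-2\tanh K\bigr)$, and multiplying by the prefactor~$4$ gives the stated bound. In case~(b), with $\nu\in[-\tfrac12,0)$, one has $C_1=1$, $C_2=h^{-1/2}$ and $C_2'=h^{-1/2}\omega$; the sum of the four contributions is $2\omega h^{-1}|\nu|(\nu+1)(E-\tanh K)+2h^{-1/2}|\nu|(\nu+1)(1-\tanh K)$. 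Here the extra observation is that $\nu(\nu+1)\le 0$ forces $\omega^2\ge h$ and therefore $h^{-1/2}\le\omega h^{-1}$, which absorbs the second piece into the first and produces the claimed bound.
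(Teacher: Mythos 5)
Your proposal is correct and follows essentially the same route as the paper: reduce to $4\bigl(y_1(K)y_2'(K)-z_1(K)z_2'(K)\bigr)$, bridge through the $k=1$ solutions $w_j$, and control all factors with Theorems~\ref{t1} and~\ref{t2} and Lemma~\ref{l3}. The only cosmetic difference is the order of telescoping --- the paper writes $y_1(y_2'-z_2')+z_2'(y_1-z_1)$ and bounds $|z_2'|\le C_2'$ by the same limiting argument you use for $|z_1|\le C_1$ --- and the resulting constants are identical; your explicit verification that the $C_j$, $C_j'$ are nondecreasing in $k$ and that $h^{-1/2}\le\omega h^{-1}$ in case~(b) makes precise two steps the paper leaves implicit.
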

\begin{proof}
Using \eqref{zdisc} and \eqref{hilldisc}, we have
\[ D(h,\nu,k)-2\operatorname{Re}\bigl(B {\rm e}^{2{\rm i}\omega K}\bigr)= 4(y_1(K)y_2'(K)-z_1(K)z_2'(K)) .\]
Using Lemma \ref{l3}, we estimate
\begin{gather*}
 \bigl|D(h,\nu,k)-2\operatorname{Re}\bigl(B {\rm e}^{2{\rm i}\omega K(k)}\bigr)\bigr|\\
 \qquad \le 4|y_1(K)||y_2'(K)-z_2'(K)|+4|z_2'(K)||y_1(K)-z_1(K)|\\
 \qquad \le 4C_1|y_2'(K)-z_2'(K)|+4C_2'|y_1(K)-z_1(K)|.
\end{gather*}
In fact, $|w_2'(t)|\le C_2'$
implies $|z_2'(t)|\le C_2'$ because of \eqref{limit2}.
Now we use Theorems \ref{t1} and \ref{t2} to estimate
\begin{gather*}
|y_1(K)-z_1(K)|\le |y_1(K)-w_1(K)|+|w_1(K)-z_1(K)|\\
\qquad \le C_1C_2|\nu|(\nu+1)(E-\tanh K)+\omega^{-1}C_1|\nu|(\nu+1)(1-\tanh K),
\end{gather*}
and
\begin{gather*}
|y_2'(K)-z_2'(K)|\le |y_2'(K)-w_2'(K)|+|w_2'(K)-z_2'(K)|\\
\qquad \le C_2C_2'|\nu|(\nu+1)(E-\tanh K)+C_2|\nu|(\nu+1)(1-\tanh K).
\end{gather*}
This gives the desired statements (a) and (b) substituting the values for $C_j$ and $C_j'$.
\end{proof}

We may use $K(k)=\ln(4/k')+O\bigl(k'^2\ln k'\bigr)$ \cite[formula~(19.12.1)]{dlmf} and $\bigl|{\rm e}^{{\rm i}s}-{\rm e}^{{\rm i}t}\bigr|\le |s-t|$ for $s,t\in\R$ to
obtain
\[ D(h,\nu,k)=2\operatorname{Re}\bigl(B {\rm e}^{2{\rm i}\omega \ln(4/k')}\bigr)+O((1-k)\ln(1-k))\qquad\text{as $k\to 1$}.\]

As an illustration, take $h=6$, $\nu=\frac12$ and $k=1-{\rm e}^{-\tau}$. Figure~\ref{fig1} depicts the graphs of $\tau\mapsto D\bigl(6,\frac12,k\bigr)$ (red) and $\tau\mapsto 2\operatorname{Re}\bigl(B {\rm e}^{2{\rm i}\omega K}\bigr)$ (black).
These graphs are hard to distinguish for $\tau>2$. The Hill discriminant $D\bigl(6,\frac12,k\bigr)$ is computed using~\eqref{hilldisc}. The values of $y_1(K)$ and $y_2'(K)$ are found by numerical integration
of Lam\'e's equation~\eqref{1:Lame} using the software \textsc{Maple}.

\begin{figure}[t] \centering
 \includegraphics[width=6cm]{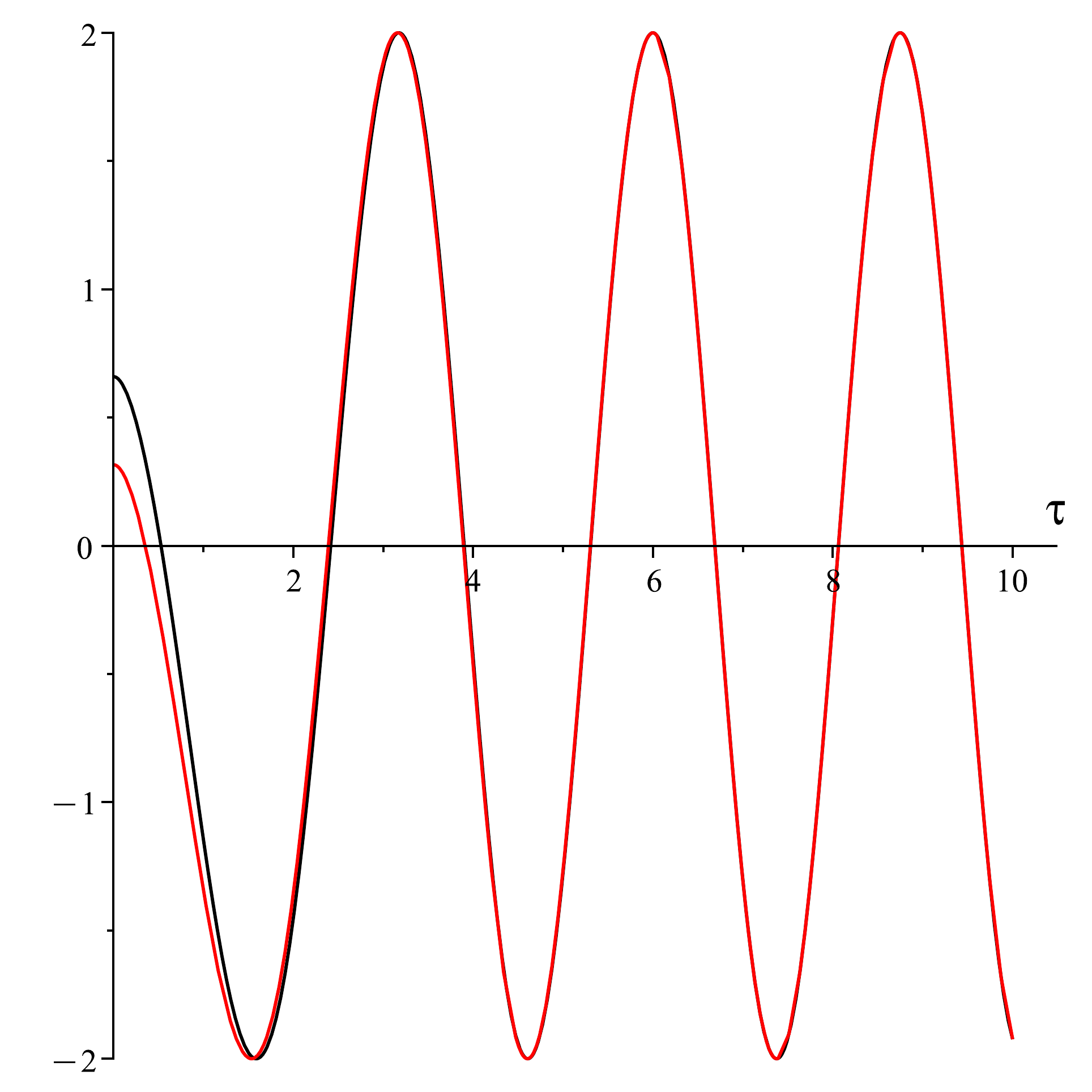}
\caption{Illustration to Theorem \ref{t3}. \label{fig1}}
\end{figure}

If $\tau=5$, then $k=0.993262\dots$ and $2\operatorname{Re}\bigl(B {\rm e}^{2{\rm i}\omega K}\bigr)=-1.274528\dots$. Theorem \ref{t3} gives $\bigl|D\bigl(6,\frac12,k\bigr)-2\operatorname{Re}\bigl(B {\rm e}^{2{\rm i}\omega K}\bigr)\bigr|\le 0.066641$.
Therefore, $|D(6,1,k)|<2$ and so Lam\'e's equation is stable for $h=6$, $\nu=\frac12$, $k=1-{\rm e}^{-5}$.

\section{Discussion and further work}

In Theorem \ref{t3}, we presented an asymptotic formula describing the behavior of the discriminant of the Lam\'e equation \eqref{1:Lame} as $k\to 1$. The proof is based on the fact that the Lam\'e equation approaches the
associated Legendre (a special case of the hypergeometric) differential equation, and the known
behavior of the hypergeometric function as the independent variable tends to $1$.
As we know from \cite{Kim} a less precise formula describing the asymptotic behavior as $\E\to0$ also exists for more general potentials in \eqref{1:eq2}. It is an interesting research question whether there exist other potentials that allow an
explicit determination of the amplitude and phase shift in this asymptotic formula.

\subsection*{Acknowledgements}

The author thanks the anonymous referees whose remarks led to an improvement of the paper.

\pdfbookmark[1]{References}{ref}
\LastPageEnding

\end{document}